\documentclass[a4paper,11pt,reqno,twoside]{amsart}

\usepackage{amsmath}
\usepackage{amsfonts}
\usepackage{amssymb}
\usepackage{amsthm}
\usepackage{color}
\usepackage{ifpdf}
\usepackage{array}
\usepackage{url}
\usepackage{multirow,bigdelim}
\usepackage{ascmac}

\addtolength{\textheight}{2cm}
\addtolength{\topmargin}{-1.5cm}
\addtolength{\textwidth}{2cm}
\addtolength{\oddsidemargin}{-1cm}
\addtolength{\evensidemargin}{-1cm}

\numberwithin{equation}{section}

\newtheorem{theorem}{Theorem}[section]
\newtheorem{lemma}{Lemma}[section]

\newcommand*{\C}{\mathbb{C}}
\newcommand*{\R}{\mathbb{R}}

\newcommand{\comment}[1]{}
\title[On infinitely divisible distributions related to the Riemann hypothesis]%
      {On infinitely divisible distributions related to \\ the Riemann hypothesis} 
\author[T. Nakamura]{Takashi Nakamura}
\author[M. Suzuki]{Masatoshi Suzuki}
\date{Version of \today}
\subjclass[]{
11M26 
60E07 
}
\keywords{characteristic function; 
infinitely divisible distribution;
L{\'e}vy-Khintchine formula; 
Riemann zeta-function; 
Riemann Hypothesis}
\AtBeginDocument{%
\begin{abstract}
For the Riemann zeta-function, 
we introduce a function such that it is a characteristic function of 
an infinitely divisible distribution on the real line 
if and only if the Riemann Hypothesis is true. 
\end{abstract}
\maketitle
}
\begin{document}

%
\section{Introduction} 
%

The Riemann zeta-function $\zeta(s)$ is a function of a complex variable 
$s=\sigma+it$ ($\sigma, t \in \R$) defined by the Dirichlet series 
\begin{equation*}
\zeta(s) := \sum_{n=1}^{\infty} \frac{1}{n^s}
\end{equation*}
when $\Re(s)=\sigma>1$ and extends meromorphically to $\C$. 
Since $\zeta(s)$ is holomorphic except for a simple pole at $s=1$,  
the Riemann xi-function
\begin{equation} \label{eq_101}
\xi(s) := \frac{1}{2}s(s-1)\pi^{-s/2}\Gamma\left(\frac{s}{2}\right) \zeta(s)
\end{equation}
is an entire function, where $\Gamma(s)$ is the Gamma function 
(\cite[Theorem 2.1]{Tit86}). 
The Riemann Hypothesis (RH, for short), a famous open problem, 
claims that 
all non-real zeros of $\zeta(s)$ lie on the critical line $\Re(s)=1/2$,  
which is equivalent to all the zeros of $\xi(1/2-iz)$ being real.
The main result of the present paper states an equivalence condition for the RH 
in terms of infinitely divisible distributions in probability theory. 

A distribution (or probability measure) $\mu$ on $\R$ is infinitely divisible 
if there exists a distribution $\mu_n$ on $\R$ 
such that 
$\mu=\mu_n \ast \dots \ast \mu_n$ ($n$-fold) for every positive integer $n$. 
For any infinitely divisible distribution $\mu$, 
there exists a triplet $(a,b,\nu)$ 
consisting of $a \in \R_{\geq 0}$, $b \in \R$ 
and a measure $\nu$ on $\R$ 
such that the characteristic function 
\begin{equation*}
\widehat{\mu}(t)=\int_{-\infty}^{\infty} e^{itx} \mu(dx)
\end{equation*}
has the L{\'e}vy--Khintchine formula 
\begin{equation} \label{LK_1}
\widehat{\mu}(t) = \exp\left[
- \frac{1}{2} a t^2 + i b t 
+ \int_{-\infty}^{\infty}\left(
e^{it\lambda}  - 1 - \frac{it\lambda}{1+\lambda^2} 
\right) \nu(d \lambda)
\right], 
\end{equation}
\begin{equation} \label{LK_2}
\nu(\{0\})=0, \quad \int_{-\infty}^{\infty} \min(1,\lambda^2) \,\nu(d\lambda) < \infty
\end{equation}
(\cite[Theorem 8.1 and Remark 8.4]{Sa99}). 
The measure $\nu$ is referred to as the L{\'e}vy measure for $\mu$. 
If the L{\'e}vy measure $\nu$ satisfies $\int_{|\lambda|\leq 1} |\lambda|\,\nu(d\lambda)<\infty$, 
then \eqref{LK_1} can be rewritten as
\begin{equation} \label{LK_3}
\widehat{\mu}(t) = \exp\left[
- \frac{1}{2} a t^2 + i b_0 t 
+ \int_{-\infty}^{\infty}(e^{it\lambda}  - 1) \, \nu(d \lambda)
\right]
\end{equation}
(\cite[(8.7)]{Sa99}). 

The relation between the RH and an infinitely divisible distribution  
is formulated using the function $g_\zeta(t)$ defined by 
\begin{equation} \label{eq_105} 
\aligned 
g_\zeta(t)
& := - 4\,(e^{t/2}+e^{-t/2}-2) + \sum_{n \leq e^t} \frac{\Lambda(n)}{\sqrt{n}}(t-\log n) \\
& \quad -
 \frac{t}{2}\left( \psi(1/4) - \log \pi \right)
+
\frac{1}{4}\left( e^{-t/2}\Phi(e^{-2t},2,1/4) - \Phi(1,2,1/4)  \right)
\endaligned 
\end{equation}
for non-negative $t$ and $g_\zeta(t):=g_\zeta(-t)$ for negative $t$, 
where $\Lambda(n)=\log p$ if $n$ is a power of a prime $p$  
and otherwise $\Lambda(n)=0$, 
$\psi(z)=\Gamma'(z)/\Gamma(z)$ is the digamma function, and 
$\Phi(z,s,a) = \sum_{n=0}^{\infty} (n+a)^{-s}z^n$ 
is the Hurwitz--Lerch zeta-function. 
The function $g_\zeta(t)$ was originally introduced in \cite[(1.1), (1.3), and (1.8)]{Su22} 
to study equivalent conditions for the RH 
in relation to Weil's positivity or Li's criterion, etc. 

Then the main results are stated as follows:

\begin{theorem} \label{thm_1} 
Let $g_\zeta(t)$ be the function on the real line defined as above.
Then the following two are equivalent: 
\begin{enumerate}
\item the Riemann Hypothesis is true; 
\item $\exp(g_\zeta(t))$ is 
a characteristic function of 
an infinitely divisible distribution on $\R$. 
\end{enumerate}
\end{theorem}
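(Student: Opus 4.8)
The plan is to reduce the whole statement to a single \emph{explicit-formula identity} expressing $g_\zeta$ as a sum over the nontrivial zeros of $\zeta$, and then read off both implications from the shape of that sum. Write $\rho=1/2+i\gamma$ for the nontrivial zeros, so that RH asserts $\gamma\in\R$. I would apply the Guinand--Weil explicit formula to the even test pair consisting of the tent function $g_t(u)=-\tfrac12(t-|u|)^{+}$ and its Fourier transform $h_t(r)=-(1-\cos(rt))/r^{2}$. The arithmetic side of the formula then reproduces exactly the four terms defining $g_\zeta$ in \eqref{eq_105}: the prime sum $-2\sum_n \Lambda(n)n^{-1/2}g_t(\log n)$ is the second term; the polar and trivial-factor contributions $h_t(i/2)+h_t(-i/2)=8(1-\cosh(t/2))$ give the first term $-4(e^{t/2}+e^{-t/2}-2)$; the factor $\pi^{-s/2}$ contributes $-g_t(0)\log\pi=\tfrac{t}{2}\log\pi$; and the archimedean integral $\tfrac1{2\pi}\int h_t(r)\,\tfrac{\Gamma'}{\Gamma}(\tfrac14+\tfrac{ir}{2})\,dr$ produces the remaining part $-\tfrac{t}{2}\psi(1/4)$ of the third term together with the Hurwitz--Lerch term. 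Since the spectral side is $\sum_\gamma h_t(\gamma)$, the identity to establish is
\[
g_\zeta(t) = -\sum_{\gamma} \frac{1-\cos(\gamma t)}{\gamma^{2}},
\]
the sum running over all nontrivial zeros with multiplicity; it converges because $\sum_\gamma|\gamma|^{-2}<\infty$. This representation is essentially the one underlying \cite{Su22}, so I would cite it rather than re-derive it in full.

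\textbf{Direction (1)$\Rightarrow$(2).} Under RH every $\gamma$ is real and the zeros occur in pairs $\pm\gamma$, so the displayed identity reads $g_\zeta(t)=-\int_{\R}(1-\cos(\lambda t))\,\nu(d\lambda)$ with $\nu:=\sum_\gamma \gamma^{-2}\delta_\gamma$. This $\nu$ is a nonnegative symmetric measure with $\nu(\{0\})=0$ (as $\xi(1/2)\neq0$) and, since $|\gamma|>1$ for every $\gamma$, with finite total mass $\sum_\gamma\gamma^{-2}<\infty$; in particular \eqref{LK_2} holds and $\int_{|\lambda|\le1}|\lambda|\,\nu(d\lambda)=0$. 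By symmetry $\int(e^{it\lambda}-1)\,\nu(d\lambda)=-\int(1-\cos\lambda t)\,\nu(d\lambda)=g_\zeta(t)$, so $\exp(g_\zeta(t))$ is exactly of the form \eqref{LK_3} with $a=0$ and $b_0=0$; it is therefore the characteristic function of a symmetric compound Poisson, hence infinitely divisible, distribution.

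\textbf{Direction (2)$\Rightarrow$(1), contrapositive.} Suppose RH fails, so there is a zero with $\eta:=|\Re\rho-1/2|=|\Im\gamma|>0$. By the symmetries $\xi(s)=\xi(1-s)$ and $\xi(\bar s)=\overline{\xi(s)}$ the zeros are invariant under $\gamma\mapsto-\gamma$ and $\gamma\mapsto\bar\gamma$, so off-line zeros occur in quadruples and $g_\zeta$ remains real-valued. For $\gamma=\delta-i\eta$ with $\eta>0$ one has $\cos(\gamma t)=\cos(\delta t)\cosh(\eta t)+i\sin(\delta t)\sinh(\eta t)$, whence the corresponding summand contributes a term of exponential order $e^{\eta|t|}$ to $g_\zeta(t)$. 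On the other hand, if $\exp(g_\zeta)$ is an infinitely divisible characteristic function, then its characteristic exponent $g_\zeta$ grows at most quadratically, $|g_\zeta(t)|=O(t^{2})$: this follows directly from \eqref{LK_1}--\eqref{LK_2} via the bound $\bigl|e^{it\lambda}-1-\tfrac{it\lambda}{1+\lambda^{2}}\bigr|\le C(1+t^{2})\min(1,\lambda^{2})$. Comparing the exponential and the quadratic growth rates gives a contradiction, so all zeros lie on the critical line.

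The routine parts are the four term-by-term matchings in the explicit formula (Step 1) and the verification of \eqref{LK_2}/\eqref{LK_3} (Step 2). \emph{The main obstacle is Step 3}: making ``contributes a term of exponential order $e^{\eta|t|}$'' rigorous requires excluding cancellation among \emph{all} off-line zeros of maximal $|\Im\gamma|$. I would isolate $\eta^{\ast}=\sup_\rho|\Re\rho-1/2|>0$, extract the leading exponential, and show $\limsup_{t\to\infty}e^{-\eta^{\ast}t}\,|g_\zeta(t)|>0$ by invoking that a nonzero finite exponential sum $\sum_j c_j e^{i\delta_j t}$ cannot tend to $0$. Alternatively, and more cheaply, one may bypass this growth analysis entirely by invoking the Weil-positivity equivalence of \cite{Su22}, according to which $g_\zeta(t)\le0$ for all $t$ is \emph{itself} equivalent to RH; since any characteristic function satisfies $|\exp(g_\zeta)|\le1$ and $g_\zeta$ is real, we get $g_\zeta\le0$, and the implication (2)$\Rightarrow$(1) follows at once.
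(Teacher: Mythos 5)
Your direction (1)$\Rightarrow$(2) is essentially the paper's own proof. After pairing $\gamma$ with $-\gamma$ via the functional equation \eqref{eq_201}, your cosine identity is exactly \eqref{eq_107}, and your construction of the triplet $(0,0,\nu)$ with $\nu=\sum_\gamma m_\gamma\gamma^{-2}\delta_\gamma$ is precisely what the paper does in Theorem \ref{thm_2}, invoking \cite[Theorem 8.1]{Sa99}. (One small remark: you do not need the fact $|\gamma|>1$; finiteness of the total mass and of $\int_{|\lambda|\le1}|\lambda|\,\nu(d\lambda)$ already follows from $\sum_\gamma m_\gamma|\gamma|^{-2}<\infty$, as in Lemma \ref{lem_1}, together with $\xi(1/2)\neq0$.)

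The real issue is (2)$\Rightarrow$(1), and the obstacle you flag in your primary route is a genuine gap which your proposed fix does not close. The supremum $\eta^{\ast}=\sup_\gamma|\Im\gamma|$ need not be attained; even if it is attained, it may be attained by infinitely many zeros, so there is no \emph{finite} exponential sum to which your non-vanishing lemma applies; and the zeros with $|\Im\gamma|<\eta^{\ast}$ may accumulate at height $\eta^{\ast}$, so their aggregate contribution need not be $o(e^{\eta^{\ast}t})$. This cancellation problem is exactly what Landau-type theorems exist to circumvent, and that is what the paper does instead: it proves the Fourier integral formula \eqref{eq_202} (Lemma \ref{lem_2}), observes that (2) forces $g_\zeta\le0$ (realness is immediate from \eqref{eq_105}, and $|\widehat{\mu}|\le1$), and then applies Widder's form of Landau's theorem \cite[Theorem 5b in Chap.~II]{Wi41}: for a one-signed integrand, the Laplace transform $f(z)=\int_0^\infty g_\zeta(t)e^{izt}\,dt$ must have a singularity at the pure imaginary point of its axis of convergence. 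Since the right-hand side of \eqref{eq_202} is regular on $i\R_{>0}$, the integral $f$ converges and is holomorphic in the whole upper half-plane, so $\xi(1/2-iz)$ has no zeros there, and \eqref{eq_201} finishes. Your fallback route---citing \cite{Su22} for the equivalence of RH with $g_\zeta\le0$---is logically sound and would complete the proof, but it is not really a cheaper alternative: the nontrivial implication of that cited equivalence is proved by precisely this Landau-type singularity argument, so you are outsourcing the paper's key step rather than replacing it.

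Finally, a constructive remark: your own quadratic bound already suffices to finish your primary route with no oscillation analysis at all, \emph{provided} you also establish the integral formula \eqref{eq_202}, which your proposal never does and which cannot be skipped. Indeed, if $|g_\zeta(t)|\le C(1+t^2)$ (which, as you say, follows from \eqref{LK_1}--\eqref{LK_2}), then $f(z)=\int_0^\infty g_\zeta(t)e^{izt}\,dt$ converges absolutely and locally uniformly for $\Im(z)>0$ and hence is holomorphic in the upper half-plane; by \eqref{eq_202} and the identity theorem it coincides there with $z^{-2}(\xi'/\xi)(1/2-iz)$, which therefore has no poles with $\Im(z)>0$, i.e.\ $\xi(1/2-iz)$ has no zeros in the upper half-plane, and the functional equation gives the rest. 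This variant genuinely uses infinite divisibility (through the growth bound), whereas the paper's argument uses only $|\widehat{\mu}|\le1$; both, however, hinge on \eqref{eq_202}, whose proof (term-by-term Laplace transform of the zero sum, justified by Lemma \ref{lem_1}) is the missing ingredient in your write-up.
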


Under the RH, the L{\'e}vy measure of the corresponding infinitely divisible distribution 
is also explicitly described as follows.

\begin{theorem} \label{thm_2}
Let $g_\zeta(t)$ be as in Theorem \ref{thm_1}. 
Assume that the RH is true and define 
\begin{equation} \label{Levy_1}
\nu_\zeta(d\lambda) := \sum_{\gamma} \frac{m_\gamma}{\gamma^2} \, \delta_{-\gamma}(d\lambda),
\end{equation}
where $\gamma$ runs through all zeros of $\xi(1/2-iz)$ without counting multiplicity, 
$m_\gamma$ is the multiplicity of $\gamma$, 
and $\delta_x$ is the delta measure at $x$. 
Then the L{\'e}vy--Khintchine formula \eqref{LK_3} 
of $\exp(g_\zeta(t))$ holds for the triplet $(a,b_0,\nu)=(0,0,\nu_\zeta)$, 
that is, $\exp(g_\zeta(t))$ is a characteristic function of a compound Poisson distribution.
\end{theorem}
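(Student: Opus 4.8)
The plan is to reduce Theorem~\ref{thm_2} to the single analytic identity
\begin{equation} \label{eq_key}
g_\zeta(t) = \sum_{\gamma} \frac{m_\gamma}{\gamma^2}\bigl(e^{-it\gamma} - 1\bigr)
\qquad (t \in \R),
\end{equation}
where $\gamma$ runs over the zeros of $\xi(1/2-iz)$ without multiplicity, all real under the RH. Granting \eqref{eq_key}, its right-hand side is precisely $\int_{-\infty}^{\infty}(e^{it\lambda}-1)\,\nu_\zeta(d\lambda)$ for the measure $\nu_\zeta$ of \eqref{Levy_1}, so \eqref{LK_3} holds with $(a,b_0,\nu)=(0,0,\nu_\zeta)$ and $\exp(g_\zeta(t))$ is the asserted characteristic function. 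Before proving \eqref{eq_key} I would record the two facts making $\nu_\zeta$ an admissible, and in fact finite, L\'evy measure: first, $\xi(1/2)\neq 0$, so $0$ is not among the $\gamma$ and $\nu_\zeta(\{0\})=0$; second, $\sum_{\gamma} m_\gamma/\gamma^2 < \infty$, which follows from the Riemann--von Mangoldt estimate $\#\{\gamma: |\gamma|\le T\}\ll T\log T$ by partial summation. The finiteness of $c:=\nu_\zeta(\R)=\sum_\gamma m_\gamma/\gamma^2$ is exactly what promotes the conclusion from ``infinitely divisible'' to ``compound Poisson'': writing $\sigma:=\nu_\zeta/c$, the identity \eqref{eq_key} reads $g_\zeta(t)=c(\widehat{\sigma}(t)-1)$, the characteristic exponent of a compound Poisson law (\cite{Sa99}).

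The triplet entries $a=0$ and $b_0=0$ then come for free. The representation \eqref{eq_key} carries no $t^2$-term, so the Gaussian part is absent and $a=0$. For $b_0$ I would avoid a separate computation by symmetry: $g_\zeta$ is even by construction, and $\nu_\zeta$ is invariant under $\lambda\mapsto-\lambda$ because the functional equation $\xi(s)=\xi(1-s)$ makes $z\mapsto\xi(1/2-iz)$ an even entire function, whence the zero set is symmetric and $m_\gamma=m_{-\gamma}$. Thus $\int(e^{it\lambda}-1)\,\nu_\zeta(d\lambda)$ is even in $t$ and leaves no room for a linear term, forcing $b_0=0$. Pairing $\gamma$ with $-\gamma$ also recasts the right-hand side of \eqref{eq_key} as $\sum_{\gamma>0}2m_\gamma\gamma^{-2}(\cos(t\gamma)-1)$, a real even function consistent with $g_\zeta$.

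The core is \eqref{eq_key}, which I would obtain from the Weil--Riemann explicit formula applied to the compactly supported tent function $k_t(u):=(t-|u|)_+$, $t\ge0$. A direct integration gives $\widehat{k}_t(\gamma):=\int_{-t}^{t}(t-|u|)\cos(u\gamma)\,du = 2\gamma^{-2}\bigl(1-\cos(t\gamma)\bigr)$, so that
\begin{equation*}
g_\zeta(t) = -\tfrac12\sum_{\gamma} m_\gamma \,\widehat{k}_t(\gamma)
\end{equation*}
is just a restatement of \eqref{eq_key} after the $\pm\gamma$ pairing. On the arithmetic side I would feed this test function into the explicit formula arising from $(\xi'/\xi)(s)=\tfrac1s+\tfrac1{s-1}-\tfrac12\log\pi+\tfrac12\psi(s/2)+(\zeta'/\zeta)(s)$ together with the Hadamard product, and check that the four resulting groups of terms reproduce \eqref{eq_105} with the correct constants: the coefficients $\Lambda(n)/\sqrt n$ paired against $k_t(\log n)=(t-\log n)_+$ give the prime sum $\sum_{n\le e^t}\tfrac{\Lambda(n)}{\sqrt n}(t-\log n)$; the contributions of $s=0,1$ (the $s(s-1)$ factor) lie at $z=\mp i/2$, where $\cos(t\cdot i/2)=\cosh(t/2)$ produces $-4(e^{t/2}+e^{-t/2}-2)$; and the archimedean factor $\tfrac12\psi(s/2)-\tfrac12\log\pi$ yields $-\tfrac t2(\psi(1/4)-\log\pi)+\tfrac14(e^{-t/2}\Phi(e^{-2t},2,1/4)-\Phi(1,2,1/4))$, the Hurwitz--Lerch function emerging when the digamma series $\psi(z)=-\gamma_{\mathrm E}+\sum_{n\ge0}(\tfrac1{n+1}-\tfrac1{n+z})$ (with $\gamma_{\mathrm E}$ Euler's constant) is integrated term by term against the tent. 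This matching is in essence the computation underlying the introduction of $g_\zeta$ in \cite{Su22}.

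I expect the genuine obstacle to be the rigorous justification of the last paragraph rather than the term bookkeeping. Since $k_t$ is merely continuous (corners at $u=0,\pm t$), the naive explicit formula is only conditionally valid; the clean fix is to use that $\widehat{k}_t(\gamma)=2\gamma^{-2}(1-\cos(t\gamma))$ decays like $\gamma^{-2}$, so $\sum_\gamma m_\gamma\widehat{k}_t(\gamma)$ is absolutely convergent, and to derive the identity by integrating $(\xi'/\xi)(1/2-iz)$ against $\widehat{k}_t$ around a large rectangle, bounding the horizontal sides by standard estimates for $\xi'/\xi$. The second delicate point is the exact evaluation of the archimedean integral as the stated Lerch expression: this requires interchanging the $n$-summation of the digamma series with the $u$-integration over the tent and identifying the resulting sums $\sum_{n\ge0}(n+\tfrac14)^{-2}e^{-2tn}$ as the value $\Phi(e^{-2t},2,1/4)$, which I would justify by dominated convergence and the defining series of $\Phi$. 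Once \eqref{eq_key} is secured, Theorem~\ref{thm_2} follows at once from \eqref{LK_3} and the compound Poisson characterization recalled above.
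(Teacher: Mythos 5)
Your reduction of the theorem to the identity $g_\zeta(t)=\sum_\gamma m_\gamma(e^{-i\gamma t}-1)/\gamma^2$ --- equation \eqref{eq_107} of the paper --- is exactly the paper's own architecture: under the RH the zeros are real, $\xi(1/2)\neq 0$ gives $\nu_\zeta(\{0\})=0$, the convergence of $\sum_\gamma m_\gamma\gamma^{-2}$ makes $\nu_\zeta$ a finite L\'evy measure, and \eqref{LK_3} with triplet $(0,0,\nu_\zeta)$ is then read off; your remark that finiteness of the total mass is what yields the compound Poisson conclusion is the same point the paper makes by invoking \cite[Theorem 8.1 (iii)]{Sa99}. (Your symmetry discussion of $a$ and $b_0$ is harmless but unnecessary: once the formula is exhibited, the triplet is exhibited.) Where you genuinely diverge is in how \eqref{eq_107} is established. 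You propose Weil's explicit formula applied to the tent function $k_t(u)=(t-|u|)_+$, whose transform $2\gamma^{-2}(1-\cos(t\gamma))$ you compute correctly, and your bookkeeping of the pole, prime, and archimedean contributions against \eqref{eq_105} checks out (the $\pm i/2$ terms do give $-4(e^{t/2}+e^{-t/2}-2)$ after the factor $-\tfrac12$, the prime side gives the von Mangoldt sum, and the digamma series produces the Hurwitz--Lerch terms). The paper instead proves \eqref{eq_107} entirely on the transform side: Lemma \ref{lem_2} computes $\int_0^\infty g_\zeta(t)e^{izt}\,dt$ for the zero sum via the Hadamard product, the appendix does the same term by term for the defining expression \eqref{eq_105}, and the two are matched in the half-plane $\Im(z)>1/2$, where every series and integral converges absolutely; uniqueness of the Fourier transform of continuous functions then yields \eqref{eq_107}. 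That choice is precisely engineered to avoid what you yourself flag as the ``genuine obstacle'' of your route: the explicit formula for a test function with corners is only conditionally valid, and your proposed fix (contour integration over a large rectangle with standard horizontal bounds on $\xi'/\xi$) is the one step of your proof that remains a plan rather than an argument. It is a standard and executable plan, but essentially all of the analytic work of the theorem is concentrated there; the paper's half-plane Laplace-transform argument buys absolute convergence throughout and reduces that work to elementary one-variable integrals, at the cost of being less conceptually transparent than the explicit-formula picture you describe.
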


The above two results are proved in Section \ref{section_2}. 
The core of the proof is the equality 
\begin{equation} \label{eq_107}
g_\zeta(t)= \sum_{\gamma} m_\gamma \frac{e^{-i\gamma t}-1}{\gamma^2}
\end{equation}
proved in \cite[Theorem 1.1 (2)]{Su22}, 
where the sum and $m_\gamma$ have the same meaning as in \eqref{Levy_1}. 
We give a proof of \eqref{eq_107} in the appendix (Section \ref{section_app}) 
for the convenience of readers. 
\medskip

It has long been known that the normalized function $Z_\sigma(t):=\zeta(\sigma+it)/\zeta(\sigma)$ is an infinitely divisible characteristic function for $\sigma>1$. More precisely, the distribution generated by $Z_\sigma(t)$ is compound Poisson on $\R$ 
and
\begin{equation*} 
Z_\sigma(t) = \exp \left[ \int_{-\infty}^\infty (e^{it\lambda} - 1) N_{\sigma} (d\lambda) \right], \quad
N_{\sigma} (d\lambda) := \sum_{p \in \mathbb{P}} \sum_{r=1}^\infty \frac{p^{-r\sigma}}{r} \delta_{-r\log p} (d\lambda),
\end{equation*}
where $\mathbb{P}$ denotes the set of all prime numbers (\cite[Example 17.6]{GK68}). 
Lin and Hu generalized this result for $\sum_{n=1}^\infty c(n) n^{-s}$, where $c(n)$ are completely multiplicative non-negative coefficients and $c(n) > 0$ for some $n \ge 2$ (\cite[Theorem 2]{LH01}).
Other examples of studies dealing with the Riemann zeta-function 
from a probabilistic point of view,
we cite papers reviewed in Biane--Pitman--Yor~\cite{BPY01}, Lagarias--Rains~\cite{LaRa03}, 
and Nakamura~\cite{Na15b}. 
In particular, Nakamura  discovered a direct relation between infinitely divisible distributions and the RH. 
He proved that the RH is true if and only if 
$\Xi_\sigma(t):=\xi(\sigma+it)/\xi(\sigma)$ 
is a characteristic function of 
a pretended-infinitely divisible distribution on $\R$ 
for every $1/2 < \sigma <1$, 
where a pretended-infinitely divisible distribution means a 
distribution satisfies \eqref{LK_1} 
but does not necessarily assume \eqref{LK_2}, 
and the measure $\nu$ is a signed measure on $\R$. 

The relation between the Riemann zeta-function and probability theory 
in the above literature is based on the Dirichlet series representation of $\zeta(s)$ 
or the integral representation of $\xi(s)$. 
In contrast, the results in the present paper are based on 
the integral representation of the logarithmic derivative of $\xi(s)$ in Lemma \ref{lem_2} below. 
This causes differences from known results.

\section{Proof of Theorems \ref{thm_1} and \ref{thm_2}} \label{section_2}

We start the proof with \eqref{eq_107}, and note the following. 

\begin{lemma} \label{lem_1}
The right-hand side of \eqref{eq_107} converges absolutely and uniformly 
on any compact subsets of $\R$. 
\end{lemma}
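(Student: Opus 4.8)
The plan is to prove the series
\[
\sum_{\gamma} m_\gamma \frac{e^{-i\gamma t}-1}{\gamma^2}
\]
converges absolutely and uniformly on compacts. The key fact I need is that the zeros $\gamma$ of $\xi(1/2-iz)$ do not accumulate too densely, controlled by the classical density estimate for the zeros of the Riemann zeta-function: the number of zeros $\rho = 1/2 + i\gamma$ (in the usual notation) with $|\gamma| \le T$ is $O(T \log T)$ as $T \to \infty$. Equivalently, the number of $\gamma$ (counted with multiplicity $m_\gamma$) in a dyadic window $T \le |\gamma| < 2T$ is $O(T \log T)$. This is exactly the kind of input needed to dominate the tail of the series by a convergent one.

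First I would bound the summand pointwise. Using $|e^{-i\gamma t} - 1| \le \min(2, |\gamma t|)$, I get for $t$ ranging over a compact set $|t| \le R$ the uniform bound
\[
\left| m_\gamma \frac{e^{-i\gamma t}-1}{\gamma^2} \right| \le m_\gamma \frac{2}{\gamma^2}.
\]
So it suffices to show $\sum_{\gamma} m_\gamma/\gamma^2 < \infty$, where the sum is over distinct zeros weighted by multiplicity — i.e. effectively $\sum_{\rho} 1/\gamma^2$ over all nontrivial zeros counted with multiplicity. This convergence is a standard consequence of the zero-counting estimate: splitting into dyadic ranges $2^k \le |\gamma| < 2^{k+1}$ and using that each range contributes $O(2^k \log(2^k))$ zeros each of size $O(4^{-k})$ gives a contribution $O(k\,2^{-k})$, and $\sum_k k\,2^{-k} < \infty$. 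Since the pointwise bound is independent of $t$ on the compact set, the Weierstrass $M$-test then yields absolute and uniform convergence simultaneously.

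I should be careful that the convergence claimed is genuinely unconditional — it must not secretly use the RH, since Lemma \ref{lem_1} is the first step of a proof establishing the equivalence in Theorem \ref{thm_1}, and one direction of that equivalence cannot assume the RH. The bound above does not: it only uses $|\gamma|$ as the imaginary ordinate of a nontrivial zero and the density estimate, both of which hold unconditionally. (If the RH fails, some $\gamma$ are complex, but $|\gamma|$ still grows and the counting function $N(T) = O(T\log T)$ for zeros in $|\Im(1/2-iz)| \le$ fixed strip still controls the count in $|\gamma| \le T$; the bound $|e^{-i\gamma t}-1| \le 2$ requires only $t$ real and $\gamma$ real, so in the off-RH case I would instead use $|e^{-i\gamma t}-1| \le e^{|\Im(\gamma)| R}$, still finite and uniformly bounded on a compact $t$-set, with the $\gamma^{-2}$ decay unaffected.)

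The main obstacle I anticipate is not the analytic estimate but ensuring the density input is invoked cleanly and unconditionally in whichever convention $\gamma$ is being used here. Once the summability $\sum_\gamma m_\gamma/\gamma^2 < \infty$ is in hand from $N(T) = O(T \log T)$, the rest is a direct application of the $M$-test and is routine.
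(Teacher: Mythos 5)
Your proposal is correct, and its skeleton coincides with the paper's: bound the numerator $|e^{-i\gamma t}-1|$ uniformly for $t$ in a compact set and $\gamma$ ranging over the zeros, establish $\sum_\gamma m_\gamma |\gamma|^{-2}<\infty$ unconditionally, and conclude by the Weierstrass $M$-test. The one genuine difference is the input used for summability. You invoke the Riemann--von Mangoldt counting estimate $N(T)=O(T\log T)$ and sum dyadically, getting blocks of size $O(k\,2^{-k})$; the paper instead appeals directly to the fact that $\xi(1/2-iz)$ is an entire function of order one, so by Hadamard theory the exponent of convergence of its zeros is at most one, i.e.\ $\sum_\gamma m_\gamma|\gamma|^{-\alpha}<\infty$ for every $\alpha>1$ (citing \cite[Theorem 2.12]{Tit86}). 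The two inputs are close relatives --- the $O(T\log T)$ bound itself follows from the order-one property via Jensen's formula, and conversely implies the convergence-exponent statement by partial summation --- but the paper's route is the softer one: a single function-theoretic citation with no explicit counting. Your parenthetical self-correction is also exactly the point the paper makes up front: since zeros of $\xi(1/2-iz)$ need not be real (one cannot assume the RH here), the bound $|e^{-i\gamma t}-1|\le 2$ is unavailable, and one must instead use that all zeros lie in the strip $|\Im(z)|\le 1/2$, giving $|e^{-i\gamma t}-1|\le e^{|\Im(\gamma)|\,|t|}+1\le e^{R/2}+1$ uniformly in $\gamma$ and $|t|\le R$ (note the $+1$ missing from your bound, and note that uniformity in $\gamma$ requires the strip bound $|\Im(\gamma)|\le 1/2$ explicitly, not merely that each $|\Im(\gamma)|$ is finite). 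With that small repair your argument is complete and fully unconditional.
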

\begin{proof}
Let $K$ be a compact subset of $\R$. 
Then $|e^{-i\gamma t}-1|$ is uniformly bounded in $\gamma$ and $t \in K$, 
since all zeros of $\xi(1/2-iz)$ lie in the horizontal strip $|\Im(z)| \leq 1/2$ (\cite[\S2.12]{Tit86}). 
Further, $\sum_\gamma m_\gamma |\gamma|^{-\alpha}$ 
converges for any $\alpha>1$, 
since $\xi(1/2-iz)$ is an entire function of order one (\cite[Theorem 2.12]{Tit86}). 
Hence the sum on the right-hand side of \eqref{eq_107} 
converges absolutely and uniformly for $t \in K$. 
\end{proof}

From the above lemma, the right-hand side of \eqref{eq_107} 
defines a continuous function on the real line. 
By the functional equation 
\begin{equation} \label{eq_201}
\xi(s)=\xi(1-s)
\end{equation} 
(\cite[(2.1.13)]{Tit86}), 
if $\gamma$ is a zero of $\xi(1/2-iz)$, 
then $-\gamma$ is also a zero with the same multiplicity. 
Therefore, the right-hand side of \eqref{eq_107} is an even real-valued function. 
This is consistent with the definition of $g_\zeta(t)$. 

\begin{proof}[Proof of (1)$\Rightarrow$(2) in Theorem \ref{thm_1} and  Theorem \ref{thm_2}]
Since (1) is assumed, all zeros $\gamma$ of $\xi(1/2-iz)$ are real. 
On the other hand, $\xi(1/2) \not=0$, 
because 
$(1-2^{1-\sigma})\zeta(\sigma)>0$  
and $\Gamma(\sigma/2)>0$ for any positive real numbers $\sigma$ 
(\cite[\S2.12]{Tit86}). 
Therefore, \eqref{Levy_1} defines a measure on $\R$ such that  
the formula 
\begin{equation*} 
\exp(g_\zeta(t)) 
= \exp\left[ \int_{-\infty}^{\infty}(
e^{it\lambda}  - 1) \nu_\zeta(d \lambda) \right]
\end{equation*} 
holds on $\R$, $\nu_\zeta(\{0\})=0$ and 
$\int_{|\lambda| \leq 1}|\lambda|\nu_\zeta(d\lambda)<\infty$ by \eqref{eq_107}. 
Moreover, 
\begin{equation*} 
\int_{-\infty}^{\infty}\min(1,\lambda^2)\,\nu_\zeta(d\lambda)<\infty
\end{equation*} 
by the convergence of $\sum_\gamma m_\gamma |\gamma|^{-2}$ 
in the proof of Lemma \ref{lem_1}. 
Hence, there exists an infinitely divisible distribution $\mu$ 
whose characteristic function is given by $\exp(g_\zeta(t))$ 
with the characteristic triplet $(0,0,\nu_\zeta)$
by \cite[Theorem 8.1 (iii)]{Sa99}. 
\end{proof}

To prove (2)$\Rightarrow$(1), we need the following integral formula.

\begin{lemma} \label{lem_2}
The Fourier integral formula
\begin{equation} \label{eq_202}
\int_{0}^{\infty} g_\zeta(t) \,e^{izt} \, dt = \frac{1}{z^2}\frac{\xi'}{\xi}\left(\frac{1}{2}-iz\right)
\end{equation} 
holds if $\Im(z)>1/2$. 
\end{lemma}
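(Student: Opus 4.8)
The plan is to substitute the series \eqref{eq_107} for $g_\zeta(t)$ into the left-hand side, integrate term by term, and then recognize the resulting series as the partial-fraction expansion of $\frac{\xi'}{\xi}(1/2-iz)$. The single elementary computation needed is that $\int_0^\infty e^{iwt}\,dt = i/w$ when $\Im(w)>0$; hence for a zero $\gamma$ with $\Im(z)>\Im(\gamma)$ and $\Im(z)>0$,
\[
\int_0^\infty \bigl(e^{-i\gamma t} - 1\bigr)e^{izt}\,dt = \frac{i}{z-\gamma} - \frac{i}{z} = \frac{i\gamma}{z(z-\gamma)}.
\]
Since every $\gamma$ satisfies $|\Im(\gamma)|\le 1/2$, both conditions hold for all $\gamma$ simultaneously exactly when $\Im(z)>1/2$, which is the hypothesis. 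Multiplying by $m_\gamma/\gamma^2$ and summing then gives, formally,
\[
\int_0^\infty g_\zeta(t)\,e^{izt}\,dt = \frac{i}{z}\sum_\gamma \frac{m_\gamma}{\gamma(z-\gamma)}.
\]

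The step requiring justification — and the precise reason for the restriction $\Im(z)>1/2$ — is the interchange of summation and integration, which I expect to be the main obstacle. I would control it by a Tonelli/Fubini estimate: for $t\ge 0$ one has $|e^{-i\gamma t} - 1|\le e^{|\Im(\gamma)|t}+1\le e^{t/2}+1$, so, using $\sum_\gamma m_\gamma|\gamma|^{-2}<\infty$ from the proof of Lemma \ref{lem_1}, the sum of the absolute values of the integrands is dominated by $\bigl(\sum_\gamma m_\gamma|\gamma|^{-2}\bigr)(e^{t/2}+1)e^{-\Im(z)t}$, whose integral over $[0,\infty)$ converges if and only if $\Im(z)>1/2$. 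This simultaneously shows that the left-hand integral converges absolutely and that term-by-term integration is legitimate.

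It remains to identify the series with the right-hand side. Writing $F(z):=\xi(1/2-iz)$, the functional equation \eqref{eq_201} gives $F(-z)=\xi(1/2+iz)=F(z)$, so $F$ is even and $F'(0)=0$; moreover $F(0)=\xi(1/2)\ne 0$, and $F$ is entire of order one with zeros exactly at the $z=\gamma$ of multiplicity $m_\gamma$. Taking the logarithmic derivative of the Hadamard product of $F$ and using $F'(0)=0$ to remove the linear term yields
\[
\frac{F'(z)}{F(z)} = \sum_\gamma m_\gamma\left(\frac{1}{z-\gamma} + \frac{1}{\gamma}\right) = z\sum_\gamma\frac{m_\gamma}{\gamma(z-\gamma)},
\]
the series converging absolutely since its terms are $O(|\gamma|^{-2})$. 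As $F'(z)=-i\,\xi'(1/2-iz)$, this reads $\frac{\xi'}{\xi}(1/2-iz)=iz\sum_\gamma m_\gamma/(\gamma(z-\gamma))$, so dividing by $z^2$ reproduces exactly $\frac{i}{z}\sum_\gamma m_\gamma/(\gamma(z-\gamma))$, matching the computed left-hand side. If one prefers to keep the appendix's derivation of \eqref{eq_107} logically independent of \eqref{eq_202}, the same identity follows directly from the explicit formula \eqref{eq_105} by computing the one-sided Fourier transform of each of its four terms and assembling them through $\frac{\xi'}{\xi}(s)=\frac1s+\frac{1}{s-1}-\frac12\log\pi+\frac12\psi(s/2)+\frac{\zeta'}{\zeta}(s)$ at $s=1/2-iz$; in that route the Hurwitz--Lerch term, which produces the digamma contribution, is the delicate piece.
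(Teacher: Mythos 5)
Your proposal is correct and takes essentially the same route as the paper: substitute the series \eqref{eq_107}, integrate term by term via $\int_0^\infty (e^{-i\gamma t}-1)e^{izt}\,dt = \tfrac{i}{z-\gamma}-\tfrac{i}{z}$, and identify the resulting sum with the logarithmic derivative of the Hadamard product of $\xi(1/2-iz)$, using the functional equation (evenness of $\xi(1/2-iz)$) to eliminate the linear term. If anything, your explicit Tonelli domination by $\bigl(\sum_\gamma m_\gamma|\gamma|^{-2}\bigr)(e^{t/2}+1)e^{-\Im(z)t}$ justifies the interchange of sum and integral more completely than the paper's bare appeal to Lemma \ref{lem_1}, whose uniform convergence on compacta does not by itself control the integral over all of $[0,\infty)$.
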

\begin{proof} We start with \eqref{eq_107} again.  
Since $\xi(s)$ is order one entire function, Hadamard's factorization theorem gives
\[
\xi\left(\frac{1}{2}-iz\right) = e^{a+bz} \prod_\gamma \left[\left(1-\frac{z}{\gamma} \right)
e^{\frac{z}{\gamma}} \right]^{m_\gamma}. 
\]
Taking the logarithmic derivative of both sides, 
\begin{equation} \label{eq_203}
\frac{\xi'}{\xi}\left(\frac{1}{2}-iz\right)
= ib + i\sum_\gamma m_\gamma \left( \frac{1}{z-\gamma} + \frac{1}{\gamma} \right), 
\end{equation}
where the sum on the right-hand side converges absolutely and uniformly on every
compact subset of $\C$ outside the zeros $\gamma$. 
Substituting $z=0$ in \eqref{eq_203}, we have $ib=(\xi'/\xi)(1/2)$. 
On the other hand, taking the logarithmic derivative of \eqref{eq_201}, 
\begin{equation} \label{eq_204}
\frac{\xi'}{\xi}(s) = - \frac{\xi'}{\xi}(1-s). 
\end{equation}
Substituting $s=1/2$ in \eqref{eq_204}, we get $(\xi'/\xi)(1/2)=0$, 
thus $ib=(\xi'/\xi)(1/2)=0$. For each term on the right-hand side of \eqref{eq_203}, 
we have 
\begin{equation*} 
\frac{i}{z^2}  \left( \frac{1}{z-\gamma} + \frac{1}{\gamma} \right)
=
\int_{0}^{\infty} \frac{e^{-it\gamma}-1}{\gamma^2}\, e^{izt} \, dt, 
\quad \Im(z)>\Im(\gamma)
\end{equation*}
by direct calculation of the right-hand side, 
where $|\Im(\gamma)| \leq 1/2$ (\cite[Theorem 2.12]{Tit86}). 
Therefore, \eqref{eq_202} is obtained by interchanging summation and integration, 
which is justified by Lemma \ref{lem_1}.
\end{proof}

\begin{proof}[Proof of (2)$\Rightarrow$(1) in Theorem \ref{thm_1}] 
We have $\exp(g_\zeta(t))=\widehat{\mu}(t)$ 
for some infinitely divisible distribution $\mu$ on the real line by (2). 
Then $\exp(g_\zeta(t))=\widehat{\mu}(t)=\widehat{\mu}(-t)$, 
since $g_\zeta(t)$ is an even function independent of the RH 
as mentioned before Theorem \ref{thm_1}.  
The second equality implies 
$\widehat{\mu}(t)=\int_{-\infty}^{\infty}\cos(tx)\mu(dx)$. 
Therefore, $g_\zeta(t)\,(=\log \widehat{\mu}(t))$ is real-valued. 
Moreover, $g_\zeta(t)$ is non-positive on $\R$, 
since $|\widehat{\mu}(t)| \leq 1$ holds because $\mu$ is a distribution. 

Since $g_\zeta(t)$ is non-positive, 
the pure imaginary point $iy$ on the horizontal axis $\Im(z)=y$ of convergence 
of $f(z)=\int_{0}^{\infty} g_\zeta(t) \, e^{izt} \, dt$ 
is a singularity of $f(z)$ by 
\cite[Theorem 5b in Chap. II]{Wi41}. 
On the other hand, $(\xi'/\xi)(1/2-iz)$ has no singularity 
on the positive imaginary axis $i\R_{>0}$, 
since $\xi(s)$ is holomorphic and has no zeros on the positive real line $\R_{>0}$ 
(\cite[\S2.12]{Tit86}). 
Hence $f(z)$ converges 
in the upper half-plane $\{z\,|\,\Im(z)>0\}$ 
and defines a holomorphic function there. 
Therefore,  $\xi(1/2-iz)$ has no zeros in the upper half-plane  
by \eqref{eq_202}, 
and then it also has no zeros in the lower half-plane $\{z\,|\,\Im(z)<0\}$ 
by \eqref{eq_201}.  
Thus all zeros of $\xi(1/2-iz)$ are real, it is the RH. 
\end{proof}

\section{Appendix: Proof of \eqref{eq_107}} \label{section_app} 

We have shown \eqref{eq_202} using the right-hand side of \eqref{eq_107}. 
Therefore, it is sufficient to prove that the right-hand side of \eqref{eq_105}  
satisfies \eqref{eq_202} by the uniqueness of the Fourier transform. 
Taking the logarithmic derivative of \eqref{eq_101} and 
using the series expansion 
\begin{equation*} 
\frac{\zeta'(s)}{\zeta(s)} = -\sum_{n=2}^{\infty} \frac{\Lambda(n)}{n^s} \quad (\Re(s)>1)
\end{equation*}
(\cite[(1.1.8)]{Tit86}), we have
\begin{equation*} 
\aligned 
\frac{\xi'}{\xi}(s) 
& = \frac{1}{s-1} +\frac{1}{s} - \frac{1}{2}\log \pi 
+ \frac{1}{2}\psi(s/2) 
- \sum_{n=2}^{\infty} \frac{\Lambda(n)}{n^s}
\endaligned 
\end{equation*}
for $\Re(s)>1$. Writing $1/2-iz$ as $s$, we get
\begin{equation*} 
-\int_{0}^{\infty} 4(e^{t/2}+e^{-t/2}-2)\, e^{izt} \, dt 
= \frac{1}{z^2}\,\left(\frac{1}{s-1}+\frac{1}{s}\right) \quad \text{if}~\Im(z)>1/2, 
\end{equation*}
\begin{equation*} 
\int_{0}^{\infty} t\,e^{izt} \, dt 
= -\frac{1}{z^2} \quad \text{if}~\Im(z)>0, \quad \text{and}
\end{equation*}
\begin{equation*} 
\int_{\log n}^{\infty} \frac{(t-\log n)}{\sqrt{n}} \, e^{izt} \, dt 
= -\frac{1}{z^2} \, n^{-s} \quad \text{if}~\Im(z)>0
\end{equation*}
by direct and simple calculation. The third equality leads to 
\begin{equation*} 
\aligned 
\int_{0}^{\infty}
\left(\sum_{n \leq e^t} \frac{\Lambda(n)}{\sqrt{n}}(t-\log n)\right) e^{izt} \, dt 
& = 
\frac{1}{z^2}\left(-\sum_{n=2}^{\infty} \frac{\Lambda(n)}{n^s} \right)\quad  \text{if}~
\Im(z)>1/2, 
\endaligned 
\end{equation*} 
where the condition $\Im(z)>1/2$ is due to the convergence of the series on the right-hand side.
Hence, the proof is completed if it is proved that 
\begin{equation} \label{eq_301}
\int_{0}^{\infty} 
\frac{1}{4}
\left( e^{-t/2}\Phi(e^{-2t},2,1/4) - \Phi(1,2,1/4) \right) e^{izt} \, dt 
=
 \frac{1}{2z^2} \left(
\psi(s/2)-\psi(1/4)
\right)
\end{equation}
holds for $\Im(z)>0$. To prove this, we recall the series expansion 
\begin{equation} \label{eq_302}
\psi(w) = -\gamma_0 - \sum_{n=0}^{\infty}
\left( \frac{1}{w+n} - \frac{1}{n+1} \right), 
\end{equation}
where $\gamma_0$ is the Euler--Mascheroni constant (\cite[(1.3.16)]{Le72}). 
Besides, we have 
\begin{equation*} 
\int_{0}^{\infty} \frac{e^{-2(n+\frac{1}{4}) t}-1}{2(n+\frac{1}{4})^2}\, e^{izt} \, dt 
= -\frac{1}{z^2} \left( \frac{1}{s/2+n} - \frac{1}{n+1/4} \right) 
\end{equation*}
for non-negative integers $n$ and $\Im(z)>0$ 
by direct and simple calculation.  
Therefore, equality \eqref{eq_302} implies
\begin{equation*} 
\aligned 
\int_{0}^{\infty} & 
\frac{1}{4}\left(
e^{-t/2} 
\Phi(e^{-2t},2,1/4)
-\Phi(1,2,1/4)
\right) e^{izt}\, dt \\
& = \frac{1}{2}
\int_{0}^{\infty} 
\left( 
\sum_{n=0}^{\infty}
\frac{e^{-2(n+\frac{1}{4}) t}-1}{2(n+1/4)^2}
\right) e^{izt}\, dt  
 = -\frac{1}{2z^2}  \sum_{n=0}^{\infty} \left( \frac{1}{s/2+n} - \frac{1}{n+1/4} \right) \\
& = - \frac{1}{2z^2}  \sum_{n=0}^{\infty} \left( \frac{1}{s/2+n} - \frac{1}{n+1} \right)
-
\frac{1}{2z^2}  \sum_{n=0}^{\infty} \left(  \frac{1}{n+1} - \frac{1}{n+1/4}   \right) \\
& = 
 \frac{1}{2z^2} \bigl(\psi(s/2)+ \gamma_0\bigr) 
- \frac{1}{2z^2} \bigl(\psi(1/4)+ \gamma_0\bigr) ,
\endaligned 
\end{equation*}
where $s=1/2-iz$. 
Hence, we obtain \eqref{eq_301}. 
\medskip

\noindent
{\bf Acknowledgments}~
The first and second authors were supported by JSPS KAKENHI 
Grant Number JP22K03276 and JP17K05163--JP23K03050, respectively. 
This work was also supported by the Research Institute for Mathematical Sciences, 
an International Joint Usage/Research Center located in Kyoto University.

%

%
\end{document}